\newtheorem{theorem}{Theorem}[section]
\newtheorem{corollary}[theorem]{Corollary}
\newtheorem{lemma}[theorem]{Lemma}
\theoremstyle{definition}
\newtheorem{definition}[theorem]{Definition}
\newtheorem{example}[theorem]{Example}
\numberwithin{equation}{section}
\begin{document}

\title{Polymorphic Numbers with \\ Exponential Prefix}

\author{Samer Seraj\\
Existsforall Academy\\ 
Mississauga, Ontario, Canada\\
E-mail: samer\_seraj@outlook.com
}

\date{\today}

\maketitle

\renewcommand{\thefootnote}{}

\footnote{2020 \emph{Mathematics Subject Classification}: Primary 11A63, 11D09, 11A07, 26A06.}

\footnote{\emph{Key words and phrases}: Radix representation; Diophantine equations.}

\renewcommand{\thefootnote}{\arabic{footnote}}
\setcounter{footnote}{0}

\begin{abstract}
	We observe that the computation $5^2 = 25$ has the digital property of the result being equal to the exponent concatenated directly to the left of the base. The generalization to a Diophantine equation and inequality in number bases has been articulated previously, but a comprehensive answer was not available in the literature. We classify and largely parametrize the solutions. Tools that play key roles are the Newton-Raphson method, the arithmetic-geometric means inequality, Pell's equation, and Fermat's little theorem.
\end{abstract}
\hfill
\newpage

\section{Introduction}

The computation $5^2 =25$ has the interesting digital property where the decimal form is equal to the exponent appended directly to the left of the base. To explore when this phenomenon occurs in radix representations, we generalize it to a Diophantine equation and inequality. The question of what other solutions exist has been pondered in the past \cite{Lee}, but a comprehensive answer has not previously appeared.

\begin{definition}
As it is commonly defined, an \textit{automorphic number} in base $B\ge 2$ is a positive integer $x$ such that the rightmost digits of $x^2$ produce $x$ itself. Modifying the terminology of \cite{Numericana}, for an integer $n\ge 2$, an \textit{$n$-polymorphic number} $x$ in base $B$ satisfies the property that $x^n$ evaluates to having rightmost digits that produce $x$.
\end{definition}

\begin{example}
Examples of automorphic numbers in base $10$ are $x=25$ since $25^2 = 625$ ends in $25$, and $76$ since $76^2 = 5776$ ends in $76$. See \cite{10automorphic} for all automorphic numbers in base $10$.
\end{example}

Automorphic numbers are $n$-polymorphic numbers for all integers $n\ge 2$, but the converse is generally untrue.

\begin{example}
Examples of $3$-polymorphic numbers in base $10$ that are not automorphic numbers are $9$ since $9^3=729$ ends in $9$ but $9^2=81$ does not; and $24$ since $24^3 = 13824$ ends in $24$ but $24^2=576$ does not.
\end{example}

Our requirement is that $x$ is, not only $n$-polymorphic, but also that the remaining digits to the left of the digits of $x$ in $x^n$ produce the integer $n$. This is formalized in a definition as follows.

\begin{definition}\label{def:prefix-polymorphism}
The quadruple $(x,n,B,k)$ of positive integers with $B\ge 2$ will be said to be a \textit{prefix polymorphism} if
\begin{equation}\label{eqn:key-Diophantine}
    x^n = B^k n + x,
\end{equation}
where $k$ is the number of digits of $x$ in base $B$. The last condition implies that $k$ is a function of $x$ and $B$, and it is equivalent to saying that
\begin{equation}\label{eqn:key-inequality}
    B^{k-1} \le x < B^k.
\end{equation}
\end{definition}

Immediate consequences of Definition \ref{def:prefix-polymorphism} are, by contradiction:
\begin{itemize}
    \item $x\ge 2$, because otherwise $x=1$ in \eqref{eqn:key-Diophantine} implies that $1 = B^k n + 1$ or $B^k n =0$.
    \item $n\ge 2$, because otherwise $n=1$ in \eqref{eqn:key-Diophantine} implies that $x = B^k + x$ or $B^k =0$.
\end{itemize}

Our results for prefix polymorphisms $(x,n,B,k)$ are presented in the following order in subsequent sections:
\begin{enumerate}
    \item If there an integer solution $x=\alpha$ to \eqref{eqn:key-Diophantine}, then it must be $\alpha = \lfloor(B^k n)^{\frac{1}{n}}\rfloor+1$.
    \item It is impossible that $k\ge 3$. Moreover, $k=2$ and $n\ge 3$ cannot simultaneously hold.
    \item The non-empty classes are solved as follows.
    \begin{itemize}
        \item For $k=1$ and $n=2$, the exact parametrization is easily found.
        \item For $k=2$ and $n=2$, a full parametrization is given in terms of the solution to a Pell's equation. 
        \item For $k=1$ and $n\ge 3$, the solutions are biconditionally linked to weak Fermat pseudoprimes, and an explicit infinite family is given within this class.
    \end{itemize}
\end{enumerate}

\section{From Real to Integer Roots}

\begin{lemma}\label{lem:unique-root}
For the polynomial
$$f(x) = x^n -x - c,$$
where $n\ge 2$ is an integer and $c>0$ is real, there is exactly one positive real root $\alpha$.
\end{lemma}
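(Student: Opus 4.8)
The plan is to split the claim into existence and uniqueness, handling existence with the Intermediate Value Theorem and uniqueness with an elementary convexity/monotonicity analysis of $f$ on the positive axis. For existence, observe that $f(0) = -c < 0$ while $f(x) \to +\infty$ as $x \to +\infty$, since the leading term $x^n$ dominates the lower-order terms $-x-c$; as $f$ is continuous on $[0,\infty)$, the Intermediate Value Theorem yields at least one root $\alpha \in (0,\infty)$.

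For uniqueness, I would study $f'(x) = n x^{n-1} - 1$. On $(0,\infty)$ the map $x \mapsto n x^{n-1}$ is strictly increasing (using $n\ge 2$), so $f'$ increases strictly from $f'(0) = -1$ to $+\infty$ and therefore vanishes at a single point $x_0 > 0$, with $f' < 0$ on $(0,x_0)$ and $f' > 0$ on $(x_0,\infty)$. Hence $f$ is strictly decreasing on $[0,x_0]$ and strictly increasing on $[x_0,\infty)$. In particular $f(x_0) < f(0) = -c < 0$, so $f$ has no root on $(0,x_0]$, while on $[x_0,\infty)$ it climbs strictly from a negative value to $+\infty$ and so has exactly one root there. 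Combining the two branches, $\alpha$ is the unique positive root. An alternative for the uniqueness half is to note $f''(x) = n(n-1)x^{n-2} > 0$ on $(0,\infty)$, so $f$ is strictly convex there and thus has at most two zeros in $(0,\infty)$; a second positive zero would force $f > 0$ on an interval immediately to the right of $0$, contradicting $f(0^+) = -c < 0$. One could even quote Descartes' rule of signs, since the coefficient sequence of $f$, namely $+1$ (on $x^n$), then $-1$ (on $x$), then $-c$, has exactly one sign change.

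I do not expect a real obstacle here; the only point meriting care is to argue the decreasing-then-increasing shape cleanly and to confirm that the minimum value $f(x_0)$ is strictly negative — it sits below $f(0) = -c$ — which is what rules out an extra crossing or a tangential double root on the decreasing branch.
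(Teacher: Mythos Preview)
Your proof is correct and follows essentially the same route as the paper: both compute $f'(x)=nx^{n-1}-1$, locate the unique positive critical point, deduce that $f$ is strictly decreasing then strictly increasing on $(0,\infty)$, and combine this with $f(0)=-c<0$ and $f(x)\to+\infty$ to get exactly one positive zero. Your write-up is in fact a bit more careful than the paper's (you explicitly justify $f(x_0)<f(0)<0$), and the extra convexity and Descartes' rule remarks are sound bonuses.
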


\begin{proof}
We analyze the derivative of $f$ to determine the intervals of increase and decrease, and turning points. The derivative of $f$ leads to the unique positive critical point $\beta$:
$$f'(x) = nx^{n-1} - 1 =0 \implies \beta = x = \left(\frac{1}{n}\right)^{\frac{1}{n-1}},$$
though its negative $-\beta$ is also a critical point if $n$ is odd. The positive critical point is a turning point, specifically a local minimum, because $f'$ is negative directly to the left of $\beta$ and positive directly to the right of $\beta$.

Since $x^n -x$ has a real root at $x=0$ and the $-c$ shifts the curve down, $x^n - x -c$ instead gets a negative $y$-intercept. From there, as $x$ goes to the right, the analysis of $f'$ shows that $f$ decreases to the local minimum, and then increases to infinity. In that last segment, the $x$-axis will be crossed exactly once.

All of this is evident by looking at the graphs of curves $x^n - x$ for some fixed integers $n\ge 2$.
\end{proof}

\begin{lemma}[Arithmetic-geometric means inequality]
If $n$ is a positive integer and $a_1,a_2,\ldots,a_n$ are non-negative real numbers, then
$$n\cdot \sqrt[n]{a_1\cdot a_2\cdots a_n} \le a_1+a_2+\cdots+a_n.$$
Equality holds if and only if $a_1=a_2=\cdots=a_n$. \cite[p.~49]{Cvetkovski}
\end{lemma}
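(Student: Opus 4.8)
The plan is to use Cauchy's classical forward–backward induction on $n$, after first disposing of degenerate cases. If some $a_i = 0$, then the left side is $0$ and the inequality is immediate, with equality precisely when the right side also vanishes, i.e. all $a_i = 0$. So we may assume throughout that every $a_i > 0$, and also $n \ge 2$ since $n = 1$ is trivial.

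The base case $n = 2$ reduces to $(\sqrt{a_1} - \sqrt{a_2})^2 \ge 0$, with equality iff $a_1 = a_2$. For the forward step, I would show that if the inequality holds for $n$ terms then it holds for $2n$ terms: writing $G_1 = \sqrt[n]{a_1\cdots a_n}$ and $G_2 = \sqrt[n]{a_{n+1}\cdots a_{2n}}$, the $n$-term case bounds each $G_j$ by the average of its block, and the two-term case gives $\sqrt{G_1 G_2} \le \tfrac{1}{2}(G_1 + G_2)$; since $\sqrt{G_1 G_2} = (a_1\cdots a_{2n})^{1/(2n)}$, chaining these yields the $2n$-term inequality. This establishes the claim whenever $n$ is a power of $2$. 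For the backward (descent) step, assuming the inequality for $n$ terms, given $a_1,\ldots,a_{n-1}$ I set $A = \tfrac{1}{n-1}(a_1 + \cdots + a_{n-1})$ and apply the $n$-term inequality to $a_1,\ldots,a_{n-1},A$; the arithmetic mean of these $n$ numbers is again $A$, so raising to the $n$-th power and cancelling one factor of $A$ gives $a_1\cdots a_{n-1} \le A^{n-1}$, which is exactly the $(n-1)$-term inequality. Forward and backward steps together cover every positive integer $n$.

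The main obstacle is the equality characterization, which must be threaded carefully through both induction steps rather than merely asserted. In the forward step, equality in the $2n$-term inequality forces equality in the two-term step and in both $n$-term applications, so by the inductive equality hypothesis all $2n$ numbers coincide. The delicate point is the backward step: equality in the derived $(n-1)$-term inequality forces equality in the $n$-term inequality applied to $a_1,\ldots,a_{n-1},A$, and the inductive equality hypothesis then gives $a_1 = \cdots = a_{n-1} = A$, hence in particular $a_1 = \cdots = a_{n-1}$; the converse, that equality holds when all the $a_i$ are equal, is clear by direct substitution.

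As a cleaner alternative for the equality analysis, I would also mention Pólya's argument: from $e^t \ge 1 + t$ for all real $t$, with equality iff $t = 0$ (proved by the same derivative analysis as in Lemma~\ref{lem:unique-root}), put $A = \tfrac{1}{n}(a_1 + \cdots + a_n)$ and multiply the inequalities $\tfrac{a_i}{A} \le e^{a_i/A - 1}$ over $i$ to get $\prod_i a_i / A^n \le e^{(a_1 + \cdots + a_n)/A - n} = e^0 = 1$; equality holds iff every $a_i / A - 1 = 0$, i.e. all $a_i = A$, which is the stated equality condition.
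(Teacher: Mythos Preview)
Your proof is correct; both the Cauchy forward--backward induction and the P\'olya exponential argument are carried through carefully, including the equality characterization, which is often the place where sloppy write-ups fail. The only cosmetic point is that in the backward step you should remark that $A > 0$ (guaranteed since you already reduced to all $a_i > 0$) before cancelling a factor of $A$.

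Note, however, that the paper does not actually prove this lemma: it is stated with a citation to \cite[p.~49]{Cvetkovski} and used as a black box in the proof of Lemma~\ref{lem:root-bounds}. So there is no ``paper's own proof'' to compare against; you have supplied a complete proof where the paper chose to quote the literature.
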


\begin{lemma}\label{lem:root-bounds}
If $n\ge 2$ is an integer and $c>1$ is real, then the unique positive real root $\alpha$ of $f(x) = x^n - x - c$ satisfies
$$c^{\frac{1}{n}} < \alpha < \frac{nc}{nc^{1-\frac{1}{n}}-1}.$$
\end{lemma}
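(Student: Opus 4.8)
The plan is to prove the two bounds separately, the lower one being essentially free and the upper one requiring a short algebraic rearrangement together with the arithmetic-geometric means inequality.

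For the lower bound, recall from Lemma~\ref{lem:unique-root} that $\alpha$ exists, is unique, and is positive, and that $f(\alpha)=0$ means $\alpha^n = \alpha + c$. Since $\alpha>0$ and $c>0$, this gives $\alpha^n > c$, and applying the strictly increasing map $y \mapsto y^{1/n}$ on the positive reals yields $\alpha > c^{1/n}$.

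For the upper bound, I would first record that the denominator $nc^{1-\frac1n}-1$ is positive: since $c>1$ we have $c^{1-\frac1n}>1$, so $nc^{1-\frac1n}-1 > n-1 \ge 1 > 0$. Hence it is legitimate to clear denominators, and the target inequality $\alpha < \frac{nc}{nc^{1-1/n}-1}$ is equivalent to $\alpha\bigl(nc^{1-\frac1n}-1\bigr) < nc$. Writing $t := c^{1/n}$ (so $t>1$ and $t^n=c$), this rearranges to
$$n t^{\,n-1}(\alpha - t) < \alpha .$$
By the lower bound already proved, $\alpha - t > 0$, so it suffices to bound $\alpha/(\alpha-t)$ from below by $nt^{\,n-1}$. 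Here is where the structure of $f$ enters: from $\alpha^n = \alpha + t^n$ we get
$$\alpha \;=\; \alpha^n - t^n \;=\; (\alpha - t)\sum_{i=0}^{n-1}\alpha^{\,n-1-i}t^{\,i},$$
so dividing by $\alpha-t>0$ reduces the problem to showing $\sum_{i=0}^{n-1}\alpha^{\,n-1-i}t^{\,i} > n t^{\,n-1}$. Apply the arithmetic-geometric means inequality to these $n$ summands: their product equals $(\alpha t)^{n(n-1)/2}$, so the sum is at least $n(\alpha t)^{(n-1)/2}$, and since $\alpha > t > 0$ and $n\ge 2$ this is strictly larger than $n(t^2)^{(n-1)/2} = nt^{\,n-1}$. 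Unwinding the equivalences gives the upper bound.

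I do not anticipate a genuine obstacle here; the only points needing care are confirming the positivity of $nc^{1-\frac1n}-1$ before clearing denominators, tracking the strictness through the AM-GM step (strict precisely because $\alpha\ne t$), and spotting the geometric-sum factorization of $\alpha^n - t^n$. It is worth remarking that the upper bound is exactly the value produced by one iteration of the Newton-Raphson method applied to $f$ with initial guess $c^{1/n}$; since $f'' > 0$ on $(0,\infty)$, an alternative proof is to invoke the standard fact that a Newton step taken from a point left of the root of a convex increasing function overshoots that root.
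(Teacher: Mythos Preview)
Your proof is correct, but the route differs from the paper's in a meaningful way. The paper frames the upper bound explicitly as a Newton--Raphson step: it sets $x_0=c^{1/n}$, computes $x_1=\dfrac{nc}{nc^{1-1/n}-1}$, and then shows $f(x_1)>0$ by a chain of algebraic reductions culminating in AM--GM applied to the $n$ numbers $n-1+nc^{1-1/n}$ and $n-1$ copies of $nc^{1-1/n}-1$. You instead work directly with the root, rewriting the target as $nt^{n-1}(\alpha-t)<\alpha$ with $t=c^{1/n}$ and using the factorization $\alpha=\alpha^n-t^n=(\alpha-t)\sum_{i=0}^{n-1}\alpha^{\,n-1-i}t^{\,i}$, after which AM--GM on the $n$ monomials $\alpha^{\,n-1-i}t^{\,i}$ does the job. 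Both arguments hinge on AM--GM, but on different $n$-tuples; your choice exploits the geometric-series identity and so avoids the paper's longer backward manipulation, while the paper's version has the advantage of never touching $\alpha$ itself---it proves $f(x_1)>0$ purely in terms of $n$ and $c$. Your lower-bound argument ($\alpha^n=\alpha+c>c$) is also a shade slicker than the paper's, which goes through the monotonicity of $f$ past the turning point. Finally, your closing remark about convexity and Newton overshoot is exactly the conceptual explanation behind the paper's computation, and is worth keeping.
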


\begin{proof}
In Lemma \ref{lem:unique-root}, the positive turning point was shown to be at $\beta = \left(\frac{1}{n}\right)^{\frac{1}{n-1}}$ for $n\ge 2$, so $0<\beta < 1.$ Then, for $x>1$ (where $1$ is a crude lower bound), $f$ is strictly increasing. Since $c>1$, it means $c$ is on this increasing segment, but it is to the left of the positive root $\alpha$ because
$$f(c^{\frac{1}{n}}) = (c^{\frac{1}{n}})^n - c^{\frac{1}{n}} -c = -c^{\frac{1}{n}}<0 = f(\alpha).$$ In other words, $c^{\frac{1}{n}} < \alpha$.

For the upper bound, we will use the Newton-Raphson method, starting with this $x_0 = c^{\frac{1}{n}}$ estimate of $\alpha$. We get the second point
$$x_1 = x_0 - \frac{f(x_0)}{f'(x_0)} = c^{\frac{1}{n}} - \frac{-c^{\frac{1}{n}}}{n(c^{\frac{1}{n}})^{n-1}-1} = \frac{nc}{nc^{1-\frac{1}{n}}-1} > 0.$$

We wish to show  that $x_1$ is to the right of $\alpha$. It suffices to prove that $f(x_1) > 0$, since $x_1$ is positive and the function is increasing to the right of $x_0$ with $f(x_0)<0$ and $f(\alpha) = 0$. This way, we will get the neat stacking $x_0 < \alpha < x_1$ and $f(x_0)<0$, $f(\alpha)=0$, and $0<f(x_1)$.

We want to prove
\begin{align*}
    0 < f(x_1) = f\left(\frac{nc}{nc^{1-\frac{1}{n}}-1}\right) &= \left(\frac{nc}{nc^{1-\frac{1}{n}}-1}\right)^n - \frac{nc}{nc^{1-\frac{1}{n}}-1} - c\\
    &= \frac{n^n c^n}{(nc^{1-\frac{1}{n}}-1)^n} - \frac{nc+nc^{2-\frac{1}{n}}-c}{nc^{1-\frac{1}{n}}-1}.
\end{align*}

Working backwards, we get
\begin{align*}
    \frac{nc+nc^{2-\frac{1}{n}}-c}{nc^{1-\frac{1}{n}}-1} &< \frac{n^n c^n}{(nc^{1-\frac{1}{n}}-1)^n}\\
    (nc+nc^{2-\frac{1}{n}}-c)(nc^{1-\frac{1}{n}}-1)^{n-1} & < n^n c^n\\
    (n-1+nc^{1-\frac{1}{n}})(nc^{1-\frac{1}{n}}-1)^{n-1} &< n^n c^{n-1}\\
    \sqrt[n]{(n-1+nc^{1-\frac{1}{n}})(nc^{1-\frac{1}{n}}-1)^{n-1}} & < nc^{1-\frac{1}{n}}.
\end{align*}

This is true because, by the arithmetic-geometric means inequality,
\begin{align*}
    &n\cdot \sqrt[n]{(n-1+nc^{1-\frac{1}{n}})(nc^{1-\frac{1}{n}}-1)^{n-1}} \\
    &\le (n-1+nc^{1-\frac{1}{n}})+(n-1)(nc^{1-\frac{1}{n}}-1)\\
    &= (n-1)(1+nc^{1-\frac{1}{n}}-1)+nc^{1-\frac{1}{n}}\\
    &= (n-1+1)\cdot nc^{1-\frac{1}{n}} = n^2c^{1-\frac{1}{n}}. 
\end{align*}

We just have to check that equality does not hold in order to make the inequality strict. By the equality criterion for the arithmetic-geometric means inequality, equality holds if and only if
$$n-1+nc^{1-\frac{1}{n}} = nc^{1-\frac{1}{n}}-1 \iff n = 0,$$
which is impossible because $n\ge 2$ for us.
\end{proof}

\begin{theorem}\label{thm:exact-root-c}
Suppose $n\ge 2$ is an integer and $c>1$ is real. If the unique positive real root $\alpha$ of $f(x) = x^n - x - c = 0$ is an integer, then $$\alpha = \lfloor c^{\frac{1}{n}} \rfloor + 1.$$
\end{theorem}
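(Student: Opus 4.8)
The plan is to show that the two bounds already supplied by Lemma~\ref{lem:root-bounds} trap $\alpha$ inside an open interval of length strictly less than $1$, namely $(c^{1/n},\, c^{1/n}+1)$, which forces an integer $\alpha$ to equal $\lfloor c^{1/n}\rfloor + 1$. The lower bound $c^{1/n} < \alpha$ is immediate from Lemma~\ref{lem:root-bounds}, so the crux is to sharpen the stated upper bound $\alpha < \frac{nc}{nc^{1-1/n}-1}$ into $\alpha < c^{1/n}+1$; equivalently, I will check that the Newton--Raphson iterate built in that lemma does not overshoot $c^{1/n}$ by a whole unit.

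Concretely, I would establish
\[
\frac{nc}{nc^{1-\frac{1}{n}}-1} < c^{\frac{1}{n}} + 1 .
\]
Substituting $t = c^{1/n}$, so that $c = t^n$, $c^{1-1/n} = t^{n-1}$, and $t>1$ since $c>1$, and clearing the positive denominator $nt^{n-1}-1$, this reduces to $nt^n < (t+1)(nt^{n-1}-1)$, i.e.\ to $nt^{n-1} > t+1$. Because $n\ge 2$ and $t>1$, we have $nt^{n-1} \ge 2t^{n-1} \ge 2t > t+1$, so the inequality holds, and hence $c^{1/n} < \alpha < c^{1/n}+1$.

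Finally, assuming $\alpha$ is an integer: from $\alpha > c^{1/n} \ge \lfloor c^{1/n}\rfloor$ together with integrality we get $\alpha \ge \lfloor c^{1/n}\rfloor + 1$, while from $\alpha < c^{1/n}+1 < \lfloor c^{1/n}\rfloor + 2$ together with integrality we get $\alpha \le \lfloor c^{1/n}\rfloor + 1$, so $\alpha = \lfloor c^{1/n}\rfloor + 1$. I do not expect a genuine obstacle here: the statement is essentially a corollary of Lemma~\ref{lem:root-bounds}, and the only care needed is the floor-function bookkeeping in this last step — in particular, if $c^{1/n}$ happened to be an integer then the interval $(c^{1/n}, c^{1/n}+1)$ would contain no integer, so no integer root would exist and the claimed formula would hold vacuously.
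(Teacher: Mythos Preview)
Your proposal is correct and follows essentially the same route as the paper: invoke Lemma~\ref{lem:root-bounds}, then show the Newton--Raphson upper bound is within $1$ of $c^{1/n}$, and finish with floor-function bookkeeping. The only cosmetic difference is that you streamline the algebra via the substitution $t=c^{1/n}$, reducing the key inequality to $nt^{n-1}>t+1$, whereas the paper manipulates the expression $c^{1/n}(nc^{1-2/n}-1)>1$ directly; these are the same computation in different notation.
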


\begin{proof}
By Lemma \ref{lem:unique-root}, there is a unique positive real root $\alpha$ of $f$. By Lemma \ref{lem:root-bounds}, bounds on $\alpha$ are
$$c^{\frac{1}{n}} < \alpha < \frac{nc}{nc^{1-\frac{1}{n}}-1}.$$
If we can prove that is interval has length strictly less than $1$, then we will know that
\begin{equation}\label{eqn:alpha-bounds}
c^{\frac{1}{n}} < \alpha < \frac{nc}{nc^{1-\frac{1}{n}}-1} < c^{\frac{1}{n}} +1,
\end{equation}
since $c^{\frac{1}{n}} +1$ on the far right is exactly $1$ more than $c^{\frac{1}{n}}$ on the far left, instead of how $\frac{nc}{nc^{1-\frac{1}{n}}-1}$ is strictly less than $1$ more than $c^{\frac{1}{n}} +1$ on the far left. According to \cite[p.~69]{Knuth}, for any integer $\alpha$ and real $r$,
$$\lfloor r\rfloor = \alpha \iff r-1<\alpha\le r,$$
so we would get $$\alpha = \lfloor c^{\frac{1}{n}}+1\rfloor = \lfloor c^{\frac{1}{n}}\rfloor + 1.$$
So, we go backwards to prove
\begin{align*}
    1 &> \frac{nc}{nc^{1-\frac{1}{n}}-1} - c^{\frac{1}{n}} = \frac{c^{\frac{1}{n}}}{nc^{1-\frac{1}{n}}-1}\\
    c^{\frac{1}{n}} &< nc^{1-\frac{1}{n}}-1\\
    1 &< nc^{1-\frac{1}{n}} - c^{\frac{1}{n}} = c^{\frac{1}{n}} (nc^{1-\frac{2}{n}}-1),
\end{align*}
which is true because $c>1$ and $n\ge 2$.
\end{proof}

Of course, if the unique positive real root of $x^n - x - c$ is not an integer, then this formula does not work, though it can still be used for the purpose of eliminating cases through contradiction.

\begin{corollary}\label{cor:x-floor-formula}
If $(x,n,B,k)$ is a prefix polymorphism, then
\begin{equation}\label{eqn:x-floor-formula}
    x = \lfloor(B^k n)^{\frac{1}{n}}\rfloor + 1.
\end{equation}
\end{corollary}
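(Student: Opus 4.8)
The plan is to recognize \eqref{eqn:key-Diophantine} as an instance of the polynomial equation studied in Section 2 and then invoke Theorem \ref{thm:exact-root-c} directly. Rearranging \eqref{eqn:key-Diophantine} gives
$$x^n - x - B^k n = 0,$$
so $x$ is a positive integer root of $f(t) = t^n - t - c$ with the specific choice $c = B^k n$. The task then reduces to checking that this setup meets the hypotheses of Theorem \ref{thm:exact-root-c} and that the integer root in question is the \emph{unique positive real} root produced by Lemma \ref{lem:unique-root}.

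First I would verify the hypotheses. The integer $n$ satisfies $n \ge 2$ by the second bulleted consequence of Definition \ref{def:prefix-polymorphism}. For the constant, since $B \ge 2$, $k \ge 1$, and $n \ge 2$, we have $c = B^k n \ge 2 \cdot 2 = 4 > 1$, as required. Next, by the first bulleted consequence, $x \ge 2 > 0$, so $x$ is a positive real number with $f(x) = 0$. By Lemma \ref{lem:unique-root}, $f$ has exactly one positive real root $\alpha$, and hence $x = \alpha$.

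Finally, applying Theorem \ref{thm:exact-root-c} with $c = B^k n$ yields
$$x = \alpha = \lfloor c^{\frac{1}{n}} \rfloor + 1 = \lfloor (B^k n)^{\frac{1}{n}} \rfloor + 1,$$
which is exactly \eqref{eqn:x-floor-formula}. I do not anticipate any genuine obstacle here; the one point that warrants explicit attention is confirming $c > 1$ (equivalently, ruling out degenerate cases like $B = 1$ or $k = 0$, which are already excluded by the standing assumptions $B \ge 2$ and by $k$ being a digit count of $x \ge 2$) so that Lemma \ref{lem:root-bounds} and Theorem \ref{thm:exact-root-c} genuinely apply, and noting that uniqueness of the positive real root is what lets us identify the integer solution $x$ with $\alpha$.
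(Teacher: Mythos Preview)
Your proposal is correct and follows essentially the same route as the paper's proof: rewrite \eqref{eqn:key-Diophantine} as $x^n - x - B^k n = 0$, note $n\ge 2$ and $c = B^k n > 1$, and apply Theorem~\ref{thm:exact-root-c}. The only difference is that you verify the hypotheses (and the identification $x = \alpha$ via Lemma~\ref{lem:unique-root}) more explicitly than the paper does, which is a welcome addition rather than a departure.
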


\begin{proof}
Suppose $(x,n,B,k)$ is a prefix polymorphism. Since $x$ is the unique positive integer solution to $$x^n - x - B^k n = 0,$$
where $n\ge 2$ and $B^k n > 1$,
Theorem \ref{thm:exact-root-c} directly gives the stated formula \eqref{eqn:x-floor-formula} for $x$.
\end{proof}

\begin{example}
We can check that, if $B=10$, $n=2$, and $k=1$, then
$$x = \lfloor \sqrt{10^1 \cdot 2} \rfloor + 1 = 5,$$
as expected from the original example of $5^2 = 25$.
\end{example}

\section{Eliminating Higher Exponents}

We will see in this section that most pairs of the exponents $n$ and $k$ cannot produce a prefix polymorphism $(x,n,B,k)$.

\begin{lemma}\label{lem:knj-inequality}
If $(x,n,B,k)$ is a prefix polymorphism such that $n$ has $j$ digits in base $B$, then
$$(k-1)(n-1) \le j.$$
\end{lemma}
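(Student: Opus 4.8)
The plan is to sandwich $x^n$ between two powers of $B$ and read off the exponents. From the definition of a prefix polymorphism we have the two-sided bound \eqref{eqn:key-inequality}, namely $B^{k-1}\le x<B^k$, and from the hypothesis that $n$ has $j$ digits in base $B$ we get $B^{j-1}\le n<B^j$; since $n$ and $B^j$ are integers, the strict inequality $n<B^j$ upgrades to $n+1\le B^j$.

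For the lower bound on $x^n$, I would simply raise $B^{k-1}\le x$ to the $n$-th power, obtaining $B^{(k-1)n}\le x^n$. For the upper bound, I would substitute the defining equation \eqref{eqn:key-Diophantine}: since $x<B^k$,
\[
x^n = B^k n + x < B^k n + B^k = B^k(n+1) \le B^k\cdot B^j = B^{k+j},
\]
where the last step uses $n+1\le B^j$ established above.

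Combining the two estimates gives $B^{(k-1)n}\le x^n<B^{k+j}$, and since $B\ge 2$ this forces the strict inequality $(k-1)n<k+j$ on the exponents. As both sides are integers, this is equivalent to $(k-1)n\le k+j-1$, which rearranges (using $(k-1)(n-1)=(k-1)n-k+1$) to exactly $(k-1)(n-1)\le j$, as claimed.

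There is no serious obstacle here; the only point requiring a little care is the two places where a strict inequality between integers is tightened by one ($n<B^j$ becoming $n+1\le B^j$, and $(k-1)n<k+j$ becoming $(k-1)n\le k+j-1$), and making sure the chain of inequalities on $x^n$ is genuinely strict on the right so that the exponent comparison is strict. Everything else is routine manipulation of \eqref{eqn:key-Diophantine} and \eqref{eqn:key-inequality}.
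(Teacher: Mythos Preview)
Your argument is correct, and it is in fact cleaner than the paper's own proof. The paper reaches the same final inequality $kn-n-k<j$ but by a more roundabout route: it first rules out $x=B^{k-1}$ by a separate divisibility argument, so that $B^{k-1}\le x-1$, and then invokes the Newton--Raphson bound from Lemma~\ref{lem:root-bounds} (specifically the upper estimate in \eqref{eqn:alpha-bounds}) to obtain $(x-1)^n<B^k n$, giving $B^{(k-1)n}\le (x-1)^n<B^k n<B^{k+j}$. Your proof sidesteps both of these by working with $x^n$ itself and plugging in the defining equation \eqref{eqn:key-Diophantine} directly: the simple observation $x^n=B^k n+x<B^k(n+1)\le B^{k+j}$ does all the work, with no case elimination and no appeal to the analytic root bounds developed in Section~2. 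The paper's route has the virtue of exercising the machinery it just built, but yours is more elementary and entirely self-contained.
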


\begin{proof}
Suppose $(x,n,B,k)$ is a prefix polymorphism. Since $x$ has $k$ digits in base $B$, we know that $B^{k-1} \le x$. To make this inequality strict, and therefore stronger, we will eliminate the possibility that $x = B^{k-1}$. So, suppose for contradiction, that $x = B^{k-1}$. Then,
\begin{align*}
    0 &= x^n - x - B^k n = (B^{k-1})^n - B^{k-1} - B^k n\\
    0 &= B^{(k-1)(n-1)} - 1 - Bn.
\end{align*}
We know that $n\ge 2$ by definition. If $k\ge 2$ as well, then we find that $B\mid 1$ or $B=1$, which is impossible. Looking at the case where $k=1$, we similarly get the contradiction $1=Bn +1$ or $B=0$. Thus, $x\ne B^{k-1}$, allowing us to state that
\begin{equation}\label{x-1-lower-bound}
B^{k-1} < x \implies B^{k-1} \le x-1 \implies B^{(k-1)n} \le (x-1)^n.
\end{equation}
By \eqref{eqn:alpha-bounds},
$$(B^k n)^{\frac{1}{n}} < x < (B^k n)^{\frac{1}{n}}  + 1,$$
where the right inequality gives
\begin{equation}\label{x-1-upper-bound}
    (x-1)^n < B^k n.
\end{equation}
Combining \eqref{x-1-lower-bound} and \eqref{x-1-upper-bound} yields
$$B^{(k-1)n} \le (x-1)^n < B^k n \implies B^{kn-n-k} < n.$$
Since $n$ has $j$ digits in base $B$, we get
\begin{align*}
    B^{kn-n-k} < n < B^j &\implies kn - n - k < j\\
    &\implies (k-1)(n-1) = kn - k - n + 1 \le j,
\end{align*}
as desired.
\end{proof}

\begin{lemma}\label{lem:power-linear-ineq}
For all integers $j\ge 2$, $$2^j \ge j$$ with equality holding if and only if $j=2$. Secondly, for all integers $n\ge 3$, $$2^{n-1} -1 \ge n$$ with equality holding if and only if $n=3$.
\end{lemma}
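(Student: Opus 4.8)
The plan is to prove each of the two inequalities by induction on the relevant index, reading off the equality cases from the base case together with the strictness of the inductive step; each also has a clean one-line proof via the binomial theorem, which I would include since it makes the equality analysis transparent.

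For the first claim: expanding $2^j = (1+1)^j$ and keeping the two lowest terms gives $2^j \ge 1 + j > j$ for every $j \ge 1$, so the bare inequality $2^j \ge j$ is immediate (and in fact strict for all $j\ge 1$). The claimed equality case $j = 2$ is really the equality case of the sharper bound $2^{j-1} \ge j$, equivalently $2^j \ge 2j$; for that, retaining $\binom{j-1}{0} + \binom{j-1}{1} = j$ in the expansion of $2^{j-1} = (1+1)^{j-1}$ shows $2^{j-1} \ge j$, with equality exactly when $\binom{j-1}{i} = 0$ for all $i \ge 2$, i.e.\ when $j \le 2$, hence precisely at $j = 2$ within the range $j \ge 2$. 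The inductive route is just as short: the base case is direct, and $2^{j+1} = 2\cdot 2^j \ge 2j \ge j + 1$ for $j \ge 1$, with the final step strict once $j \ge 2$.

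For the second claim, $2^{n-1} - 1 \ge n$ for $n \ge 3$, i.e.\ $2^{n-1} \ge n+1$: I would expand $2^{n-1} = (1+1)^{n-1}$ and retain the first three binomial coefficients, $2^{n-1} \ge \binom{n-1}{0} + \binom{n-1}{1} + \binom{n-1}{2} = n + \binom{n-1}{2}$. Since $n \ge 3$ forces $\binom{n-1}{2} \ge 1$, this yields $2^{n-1} \ge n+1$; equality requires $\binom{n-1}{2} = 1$ and all higher terms to vanish, which holds if and only if $n - 1 = 2$, that is $n = 3$. Equivalently, by induction: the base case $n = 3$ gives $2^2 - 1 = 3$, and if $2^{n-1} - 1 \ge n$ then $2^n - 1 = 2(2^{n-1} - 1) + 1 \ge 2n + 1 > n + 1$, which is strict for every $n \ge 3$, so equality occurs only at the base.

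I do not anticipate any real obstacle: these are routine estimates of an exponential against a linear function. The only thing to watch is the equality bookkeeping — confirming that the inductive step strictly increases the slack beyond the base point, so that the stated biconditionals are justified — and, for the first statement, attaching the equality condition to the correct (sharper) form of the inequality.
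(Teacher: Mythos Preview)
Your proposal is correct, and your inductive route is exactly what the paper has in mind: its entire proof is the one-line remark that ``these can be proven by comparing the growth rates of each side of each inequality, or by using induction.'' Your binomial-theorem alternative is a clean addition that makes the equality bookkeeping transparent. More to the point, you correctly diagnosed that the equality clause in the first claim cannot refer to $2^j \ge j$ as literally written (since $2^2 = 4 \neq 2$) and that the intended inequality is $2^{j-1} \ge j$; this is confirmed by the paper itself, which in the very next lemma invokes the result precisely as ``$2^{j-1} \ge j$ for $j\ge 2$ with equality if and only if $j=2$.''
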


\begin{proof}
These can be proven by comparing the growth rates of each side of each inequality, or by using induction.
\end{proof}

\begin{lemma}\label{lem:n-equals-j}
If $n\ge 2$ is an integer and $j$ is the number of digits of $n$ in base $B\ge 2$, then $n\ge j$. Equality holds, as in $n=j$, if and only if $n=j=B=2$.
\end{lemma}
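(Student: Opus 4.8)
The plan is to translate the digit-count hypothesis into the inequality $B^{j-1} \le n < B^j$ and to use only the lower bound $B^{j-1} \le n$. First I would dispose of the degenerate case $j = 1$: here $j = 1 < 2 \le n$, so $n > j$ holds strictly and in particular equality cannot occur. This already shows that if $n = j$ then $j \ge 2$.

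Next, for $j \ge 2$, I would chain $n \ge B^{j-1} \ge 2^{j-1}$, the second step using $B \ge 2$. It then remains to show $2^{j-1} \ge j$ for all integers $j \ge 2$, with equality precisely at $j = 2$. For $j = 2$ this is the equality $2 = 2$; for $j \ge 3$ it follows from the second part of Lemma \ref{lem:power-linear-ineq} (applied with its index variable equal to $j$), which gives $2^{j-1} - 1 \ge j$ and hence $2^{j-1} > j$. Combining, $n \ge B^{j-1} \ge 2^{j-1} \ge j$, which is the claimed inequality.

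For the equality characterization, suppose $n = j$. By the first paragraph $j \ge 2$, so the chain $n \ge B^{j-1} \ge 2^{j-1} \ge j = n$ must collapse to equalities throughout. The equality $B^{j-1} = 2^{j-1}$ with $j \ge 2$ forces $B = 2$ (since $B \mapsto B^{j-1}$ is strictly increasing for $j-1 \ge 1$); the equality $2^{j-1} = j$ forces $j = 2$, because for $j \ge 3$ the inequality $2^{j-1} > j$ is strict; and then $n = j = 2$, so indeed $n = j = B = 2$. Conversely, writing $2$ in base $2$ as $10$ shows it has $2$ digits, so the triple $n = j = B = 2$ really does satisfy $n = j$.

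I do not anticipate a serious obstacle; the only points requiring care are the separate treatment of the case $j = 1$ (where $B^{j-1} \le n$ is vacuous and one must invoke $n \ge 2$ instead) and ensuring the ``equality if and only if'' clause tracks exactly which of the three inequalities in the chain is responsible for each of the constraints $B = 2$ and $j = 2$.
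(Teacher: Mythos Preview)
Your proposal is correct and follows essentially the same route as the paper: both arguments use the chain $n \ge B^{j-1} \ge 2^{j-1} \ge j$ and then force equality throughout to pin down $n=j=B=2$. You are slightly more careful than the paper in two places---you separate out the case $j=1$ explicitly (the paper leaves it implicit), and you verify the converse direction by checking that $2$ written in base $2$ really has two digits---and you derive $2^{j-1}\ge j$ from the second part of Lemma~\ref{lem:power-linear-ineq} rather than the first, but none of this constitutes a genuinely different approach.
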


\begin{proof}
Since $n$ has $j$ digits in base $B$, we know that $n \ge B^{j-1}$. Since $B\ge 2$, we get $$n \ge B^{j-1} \ge 2^{j-1}.$$ By Lemma \ref{lem:power-linear-ineq}, $2^{j-1} \ge j$ for $j\ge 2$ with equality if and only if $j=2$. So,
$$n \ge 2^{j-1} \ge j.$$ Lastly, $j=n$ if and only if
$$n \ge B^{j-1} \ge 2^{j-1} \ge j = n.$$
Since the upper and lower bounds are equal, all intermediate expressions are also equal to $n$. Thus, $B^{j-1} = 2^{j-1} = n \ge 2$ if and only if $B=2$, and we already know that $2^{j-1} = j=n$ if and only if $n=j=2$.
\end{proof}

\begin{theorem}\label{thm:k-1-2}
If $(x,n,B,k)$ is a prefix polymorphism, then $k=1$ or $k=2$. In other words, there are no prefix polymorphisms with $k\ge 3$.
\end{theorem}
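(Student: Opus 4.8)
The plan is to argue by contradiction: assume $k \ge 3$, and then chain together the digit-count inequalities from the preceding lemmas to force $n$, $j$, $B$, and $k$ to take a single fixed tuple of values, after which \eqref{eqn:key-Diophantine} will be seen to have no integer solution.

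First I would apply Lemma \ref{lem:knj-inequality}, with $j$ denoting the number of digits of $n$ in base $B$, to get $(k-1)(n-1) \le j$. Under the assumption $k \ge 3$ we have $k - 1 \ge 2$, and since $n \ge 2$ we have $n - 1 \ge 1$, so $2(n-1) \le (k-1)(n-1) \le j$. Then Lemma \ref{lem:n-equals-j} gives $j \le n$, whence $2(n-1) \le n$, i.e.\ $n \le 2$; combined with the standing fact $n \ge 2$ this pins down $n = 2$. Substituting $n = 2$ back in yields $k - 1 = (k-1)(n-1) \le j \le n = 2$, so $k \le 3$, forcing $k = 3$ and $j = 2$. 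Finally the equality clause of Lemma \ref{lem:n-equals-j} (the case $n = j$) forces $B = 2$.

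At that point the only surviving possibility is $(n, B, k) = (2, 2, 3)$, and \eqref{eqn:key-Diophantine} reduces to $x^2 = 2^3 \cdot 2 + x$, that is $x^2 - x - 16 = 0$. This has discriminant $65$, which is not a perfect square (equivalently, no two consecutive positive integers multiply to $16$, since $4 \cdot 3 < 16 < 5 \cdot 4$), so there is no integer $x$, contradicting the assumption that $(x,n,B,k)$ is a prefix polymorphism. Hence $k = 1$ or $k = 2$.

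I do not anticipate a genuine obstacle; the whole argument is bookkeeping with the inequalities so that they collapse the parameter space to one tuple, followed by a one-line finite check. The only point requiring care is to invoke the equality case of Lemma \ref{lem:n-equals-j}, not merely its inequality, since that is precisely what delivers $B = 2$ (alternatively, one could note directly that $2$ has two digits in base $B$ only when $B = 2$).
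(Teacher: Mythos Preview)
Your proof is correct and uses the same two lemmas (Lemma \ref{lem:knj-inequality} and Lemma \ref{lem:n-equals-j}) as the paper, just organized as a direct contradiction from $k \ge 3$ rather than a case split on whether $j = n$. Both arguments terminate in the same finite check at $(n,B)=(2,2)$; your version has the small advantage of explicitly pinning down $k = 3$ before writing the final quadratic $x^2 - x - 16 = 0$, whereas the paper leaves $k$ implicit at that step and writes $x^2 - x - 8$.
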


\begin{proof}
Let $j$ be the number of digits of $n$ in base $B$. If we can prove that $j\ne n$, then Lemma \ref{lem:n-equals-j} would say that $n>j$ or $n-1 \ge j$. Then, Lemma \ref{lem:knj-inequality} would give
$$(n-1)(k-1) \le j \le n-1 \implies k-1 \le 1 \implies k \le 2,$$
where division by $n-1$ would be allowed because $n\ge 2$.

So, we need to only eliminate the possibility that $j=n$. By Lemma \ref{lem:n-equals-j}, if $j=n$, then $n=j=B=2$, so
$$0 = x^n - x - B^k n = x^2 - x - 8,$$
which has no integer solutions.
\end{proof}

\begin{theorem}\label{thm:n-above-3-is-k-1}
Suppose $(x,n,B,k)$ is a prefix polymorphism. If $n\ge 3$, then $k=1$.
\end{theorem}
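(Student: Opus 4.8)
The plan is to argue by contradiction and lean on what has already been established. Theorem \ref{thm:k-1-2} tells us that $k\in\{1,2\}$, so the only thing left to do is rule out the combination $k=2$ with $n\ge 3$. So suppose $(x,n,B,k)$ is a prefix polymorphism with $n\ge 3$ and $k=2$, and let $j$ denote the number of digits of $n$ in base $B$.

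The first move is to squeeze $j$ between two bounds. Applying Lemma \ref{lem:knj-inequality} with $k=2$ gives $n-1=(k-1)(n-1)\le j$. In the other direction, Lemma \ref{lem:n-equals-j} says $j\le n$, with equality only when $n=j=B=2$; since $n\ge 3$, this excludes $j=n$, forcing $j\le n-1$ and hence $j=n-1$. In particular $n$ has exactly $n-1$ digits in base $B$, which translates to $B^{\,n-2}\le n$. Note that this is the only place where the hypothesis $n\ge 3$ is used, and it is genuinely needed, since for $n=2$ the value $j=n$ is possible and produces the Pell case.

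The second move collapses the problem to finitely many cases. Since $B\ge 2$, the inequality above yields $2^{\,n-2}\le B^{\,n-2}\le n$; a short induction (or the growth-rate comparison behind Lemma \ref{lem:power-linear-ineq}) shows $2^{\,n-2}>n$ for every $n\ge 5$, so $n\in\{3,4\}$. Feeding each value back into $B^{\,n-2}\le n$ bounds the base: $n=4$ forces $B^2\le 4$, hence $B=2$; and $n=3$ forces $B\le 3$, hence $B\in\{2,3\}$. That leaves only the triples $(n,B)\in\{(4,2),(3,2),(3,3)\}$, each with $k=2$. For each triple, Corollary \ref{cor:x-floor-formula} pins down the only possible value of $x$, namely $x=\lfloor(B^2 n)^{1/n}\rfloor+1$, which works out to $3$, $3$, and $4$ respectively; substituting into \eqref{eqn:key-Diophantine} shows that none of these satisfies $x^n=B^2 n+x$ (for instance $3^4=81\ne 19=2^2\cdot 4+3$). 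Each case is therefore vacuous, contradicting the existence of a prefix polymorphism with $n\ge 3$ and $k=2$, so $k=1$.

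I do not anticipate a real obstacle: once the digit inequalities force $j=n-1$, the exponential growth of $2^{\,n-2}$ does the heavy lifting, and the residual case check amounts to evaluating three small expressions. The points needing care are using $j\ge n-1$ correctly inside the exponent when deriving $B^{\,n-2}\le n$, and remembering to test the unique candidate $x$ handed to us by Corollary \ref{cor:x-floor-formula} rather than searching for solutions from scratch.
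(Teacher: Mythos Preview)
Your proof is correct and takes a genuinely different route from the paper. The paper attacks the case $k=2$, $n\ge 3$ by factoring $x^n-x=x(x-1)(x^{n-2}+\cdots+1)\ge x(x-1)(2^{n-1}-1)$ and, for $n\ge 4$, using $2^{n-1}-1>n$ to deduce $B^2>x(x-1)$ and hence $B\ge x$, which forces $x=B$ and a quick divisibility contradiction; it then treats $n=3$ separately by the same digit-count reasoning you use. You instead stay entirely inside the digit-count framework: pushing Lemma~\ref{lem:knj-inequality} and Lemma~\ref{lem:n-equals-j} one step further than in the proof of Theorem~\ref{thm:k-1-2} pins down $j=n-1$ exactly, yielding $B^{n-2}\le n$, and exponential growth collapses everything to three small $(n,B)$ pairs. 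Your argument is more uniform and arguably cleaner, since it reuses the same mechanism that proved $k\le 2$ rather than introducing a new algebraic estimate; the paper's approach, on the other hand, handles all $n\ge 4$ without any case enumeration, at the cost of a longer computation and still needing a separate case check for $n=3$.
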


\begin{proof}
Assume $n\ge 3$ in the prefix polymorphism. Suppose, for the sake of contradiction, that $k=2$. Then,
\begin{align*}
    B^2 n = B^k n &= x^n - x = x (x^{n-1}-1)\\
    &= x(x-1)(x^{n-2} + x^{n-3} + \cdots + x +1)\\
    &\ge x(x-1)(2^{n-2} + 2^{n-3} + \cdots + 2 +1)\\
    &= x(x-1)(2^{n-1}-1)
\end{align*}
Now, $2^{n-1}-1 \ge n$  by Lemma \ref{lem:power-linear-ineq} because $n\ge 3$. To make the inequality strict, we will show that $n\ne 3$. Suppose, for the sake of contradiction, that $n=3$. Then, Lemma \ref{lem:knj-inequality} and Lemma \ref{lem:n-equals-j} give
$$2 = (2-1)(3-1)= (k-1)(n-1) \le j.$$
Since $j$ is the number of digits of $n=3$ in base $B$, the only way that $j\ge 2$ is if $B=2$ or $B=3$. We can use Corollary \ref{cor:x-floor-formula} to check that
\begin{align*}
    B=2 &\implies x = \lfloor(2^2 \cdot 3)^{\frac{1}{3}}\rfloor + 1 =3 \\
    &\implies x^n - x - B^k n = 3^3 - 3 - 2^2\cdot 3 = 12 \ne 0,\\
    B=3 &\implies x = \lfloor(3^2 \cdot 3)^{\frac{1}{3}}\rfloor + 1 =4 \\
    &\implies x^n - x - B^k n = 4^3 - 4 - 3^2\cdot 3 = 33 \ne 0.
\end{align*}

So, $n\ne 3$, implying $n\ge 4$. This gives the strict inequality
\begin{align*}
B^k n \ge x(x-1)(2^{n-1}-1) > x(x-1)n &\implies B^k n > x(x-1)n\\
    &\implies B^k > x(x-1)\\
    &\implies 4B^k +1 > (2x-1)^2\\
    &\implies 4B^k \ge (2x-1)^2\\
    &\implies 2B^{\frac{k}{2}} \ge 2x-1.
\end{align*}
Suppose, for contradiction, that $k=2$. Then, since $B$ and $x$ are integers,
$$2B \ge 2x -1 \implies B + \frac{1}{2} \ge x \implies B \ge x.$$
The only way for to have $k=2$ digits in base $B$ while satisfying $x\le B$ is if $x=B$. Then, we would get
\begin{align*}
    0 &= x^n - x - B^k n = B^n - B - B^2 n\\
    0 &= B^{n-1} - 1 - Bn,
\end{align*}
which would imply that $B\mid 1$ or $B=1$. This is a contradiction, so $k\ne 2$, which automatically means $k=1$ by Theorem \ref{thm:k-1-2}.
\end{proof}

\begin{corollary}\label{cor:solution-classes}
Prefix polymorphisms $(x,n,B,k)$ can be classified into three disjoint families:
$$\begin{array}{l|c|c}
    \text{Class} & k & n \\ \hline
    \text{Triangular} & k=1 & n=2\\
    \text{Pell} & k=2 & n=2\\
    \text{Fermat} & k=1 & n\ge 3
\end{array}$$
\end{corollary}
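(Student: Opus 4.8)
The plan is to obtain this classification as a bookkeeping consequence of the two structural theorems already proved, namely Theorem~\ref{thm:k-1-2} and Theorem~\ref{thm:n-above-3-is-k-1}, combined with the standing facts (noted after Definition~\ref{def:prefix-polymorphism}) that every prefix polymorphism has $n \ge 2$, together with $k \ge 1$ by definition. First I would apply Theorem~\ref{thm:k-1-2} to eliminate $k \ge 3$, leaving only the two alternatives $k = 1$ and $k = 2$.

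Next I would dispatch these two alternatives in turn. If $k = 2$, then, since $2 \ne 1$, the contrapositive of Theorem~\ref{thm:n-above-3-is-k-1} forbids $n \ge 3$, so necessarily $n = 2$; this is the Pell row. If instead $k = 1$, then $n$ is unrestricted beyond $n \ge 2$, and I would split further according to whether $n = 2$ (the Triangular row) or $n \ge 3$ (the Fermat row). Since $n \ge 2$ always holds, these subcases are exhaustive, so every prefix polymorphism falls into at least one of the three families.

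Finally I would check disjointness by comparing the defining conditions row by row: the Triangular and Pell rows differ in the value of $k$; the Pell and Fermat rows differ in the value of $k$; and the Triangular and Fermat rows differ in the value of $n$. Hence the three families are pairwise disjoint, and each prefix polymorphism lies in exactly one of them; the claim that all three are actually non-empty is a separate matter, deferred to the parametrizations of the later sections. I do not anticipate a genuine obstacle here --- all of the substantive analysis (the Newton--Raphson bounds of Lemma~\ref{lem:root-bounds}, the AM--GM estimate, and the digit-counting inequalities of Lemmas~\ref{lem:knj-inequality}--\ref{lem:n-equals-j}) has already been spent in proving Theorems~\ref{thm:k-1-2} and~\ref{thm:n-above-3-is-k-1}. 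The only point requiring care is not to conflate the hypotheses of the two theorems, i.e.\ remembering that the implication ``$k = 2 \Rightarrow n = 2$'' is read off Theorem~\ref{thm:n-above-3-is-k-1} by contraposition rather than appearing there as a direct statement.
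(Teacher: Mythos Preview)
Your proposal is correct and follows exactly the paper's own argument: invoke Theorem~\ref{thm:k-1-2} to force $k\in\{1,2\}$, use $n\ge 2$ to split $n=2$ versus $n\ge 3$, and apply Theorem~\ref{thm:n-above-3-is-k-1} (equivalently, its contrapositive) to rule out $(k,n)=(2,\ge 3)$, with disjointness immediate from the defining conditions. The only difference is that you spell out the contraposition and the pairwise disjointness checks more explicitly than the paper does.
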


\begin{proof}
By Theorem \ref{thm:k-1-2}, $k=1$ or $k=2$. By definition, $n \ge 2$, so $n=2$ or $n \ge 3$. By Theorem \ref{thm:n-above-3-is-k-1}, $k=2$ and $n\ge 3$ cannot simultaneously hold. This proves that the stated classes are the only ones, and it is clear that they are disjoint.
\end{proof}

The name of the classes in Corollary \ref{cor:solution-classes} will make sense when we parame-trize or otherwise explore the families in the next section.

\section{Classifying Solutions}

\begin{theorem}[Triangular class]\label{thm:triangular-class}
All prefix polymorphisms $(x,n,B,k)$ such that $n=2$ and $k=1$ are parametrized by
$$(x,n,B,k) = \left(t,2,\frac{t(t-1)}{2},1\right),$$
ranging over all integers $t\ge 4$.
\end{theorem}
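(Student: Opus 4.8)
The plan is to manipulate the defining equation \eqref{eqn:key-Diophantine} directly under the hypotheses $n = 2$ and $k = 1$. Substituting these values turns \eqref{eqn:key-Diophantine} into $x^2 = 2B + x$, which rearranges to
\[
    B = \frac{x(x-1)}{2}.
\]
Since one of $x$ and $x-1$ is always even, the right-hand side is automatically an integer, so $B$ is simply determined by $x$ with no divisibility obstruction. This already identifies the parametrization, with $t$ playing the role of $x$.

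Next I would extract the constraint on $t$. The requirement that $x$ has $k = 1$ digits in base $B$ is, by \eqref{eqn:key-inequality}, equivalent to $1 = B^{0} \le x < B^{1} = B$. Feeding in $B = x(x-1)/2$, the inequality $x < x(x-1)/2$ simplifies (dividing by the positive quantity $x$) to $2 < x - 1$, that is, $x \ge 4$. I would also record that the standing requirement $B \ge 2$ is then automatic, since $x \ge 4$ forces $B = x(x-1)/2 \ge 6$. This gives the forward direction: every prefix polymorphism with $n = 2$ and $k = 1$ has the claimed shape with $t = x \ge 4$.

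For the converse, given any integer $t \ge 4$, I would set $x = t$, $n = 2$, $B = t(t-1)/2$, $k = 1$, and check the two clauses of Definition \ref{def:prefix-polymorphism}: that all four entries are positive integers with $B \ge 2$ (clear, with $B$ integral because $t(t-1)$ is even); that $x^n = B^k n + x$, which is the identity $t^2 = 2 \cdot \frac{t(t-1)}{2} + t$; and that $k = 1$ really is the number of digits of $x = t$ in base $B$, i.e.\ $1 \le t < t(t-1)/2$, whose right inequality is exactly $t \ge 4$. As a consistency check against Corollary \ref{cor:x-floor-formula}, note that $\lfloor (2B)^{1/2} \rfloor + 1 = \lfloor \sqrt{t(t-1)} \rfloor + 1 = (t-1) + 1 = t$, using $(t-1)^2 < t(t-1) < t^2$.

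There is no genuine obstacle in this argument; the only two points that warrant care are the automatic integrality of $B = x(x-1)/2$, and the precise translation of "$k = 1$ digits" into the strict inequality $x < B$, which is what produces the lower bound $t \ge 4$ rather than $t \ge 3$.
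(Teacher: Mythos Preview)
Your proof is correct and follows essentially the same route as the paper: substitute $n=2$, $k=1$ into \eqref{eqn:key-Diophantine} to solve for $B=x(x-1)/2$, then use the one-digit condition $x<B$ to force $t\ge 4$, and verify the converse directly. Your additional remarks on the automatic integrality of $B$ and the consistency check against Corollary~\ref{cor:x-floor-formula} are pleasant but not needed for the argument.
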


\begin{proof}
Suppose $n=2$ and $k=1$. Then
$$0 = x^n - x - B^k n = x^2 - x - 2B \implies B = \frac{x(x-1)}{2}.$$
Taking $t=x$ completes the parametrization, though we need to check that $x=t$ has $k=1$ digit in base $B = \frac{t(t-1)}{2}$. This requires
\begin{align*}
    x < B &\iff t < \frac{t(t-1)}{2} \iff 2t < t^2 - t\\
    &\iff 2 < t - 1 \iff 3 < t \iff 4 \le t.
\end{align*}
To be explicit, we can check that $x=t=2$ leads to $B=1$ which contradicts $x<B$ (and that $B\ge 2$), and that $x=t=3$ leads to $B=3$ which contradicts $x<B$ as well. Therefore, $t\ge 4$ is necessary and sufficient.
\end{proof}

\begin{example}
A simple case of Theorem \ref{thm:triangular-class} takes $t=4$ to get $(x,n,B,k) = (4,2,6,1)$, and we can check that
$$4_6^2 = 4_{10}^2 = 16_{10} = (24)_{6}.$$
\end{example}

\begin{theorem}[Pell class]\label{thm:Pell-class}
All prefix polymorphisms $(x,n,B,k)$ with $n=2$ and $k=2$ are parametrized by
$$(x,n,B,k) = \left(\frac{1+x_t}{2},2,y_t,2\right),$$
where $(x_t,y_t)$ for each integer $t\ge 2$ is given by the expansion of
\begin{equation}\label{eqn:Pell-parametrization}
    x_t + y_t \sqrt{8} = (3+\sqrt{8})^t.
\end{equation}
\end{theorem}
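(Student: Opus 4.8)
The plan is to reduce the defining equation to a Pell equation and then invoke the classical description of its solutions. Substituting $n=2$ and $k=2$ into \eqref{eqn:key-Diophantine} turns it into $x^2 = 2B^2 + x$, that is, $x^2 - x - 2B^2 = 0$. Solving this quadratic in $x$ and keeping the positive root gives $x = \frac{1+\sqrt{1+8B^2}}{2}$, so $x$ is a positive integer exactly when $1+8B^2$ is a perfect square. Writing $1+8B^2 = m^2$, the congruence $m^2 \equiv 1 \pmod 8$ forces $m$ to be odd, hence $x = \frac{1+m}{2}$ is automatically an integer, and the condition becomes the Pell equation $m^2 - 8B^2 = 1$.

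Next I would apply the standard structure theorem for Pell's equation: the fundamental solution of $X^2 - 8Y^2 = 1$ is $(X,Y)=(3,1)$, and every positive integer solution $(m,B)$ has the form $m + B\sqrt{8} = (3+\sqrt{8})^t$ for a unique integer $t\ge 1$. Matching this against the notation of \eqref{eqn:Pell-parametrization} identifies $(m,B)=(x_t,y_t)$ and $x = \frac{1+x_t}{2}$, which is precisely the claimed parametrization. It then remains to pin down the range of $t$: the value $t=1$ gives $(x_1,y_1)=(3,1)$, hence $B=1$, which is not a valid base, so $t\ge 2$; and conversely every $t\ge 2$ must be shown to produce a genuine prefix polymorphism.

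For the converse direction I would check the remaining requirements of Definition \ref{def:prefix-polymorphism} for $(x,n,B,k) = \left(\frac{1+x_t}{2},2,y_t,2\right)$ with $t\ge 2$. First, $x$ is a positive integer with $x\ge 2$, because $x_t$ is odd (since $x_t^2 = 1+8y_t^2$ is odd) and $x_t\ge 17$ for $t\ge 2$. Second, \eqref{eqn:key-Diophantine} holds by construction, as $m^2 - 8B^2 = 1$ rearranges back to $x^2 - x - 2B^2 = 0$. Third, and this is the only place needing a short computation, $k=2$ must genuinely be the number of digits of $x$ in base $B$, i.e. $B\le x < B^2$: the lower bound follows from $m = \sqrt{1+8B^2} > 2\sqrt{2}\,B$, which gives $x = \frac{1+m}{2} > \sqrt{2}\,B > B$, and the upper bound $x < B^2$ is equivalent, after clearing the square root, to $3 < B^2$, which holds since $B = y_t \ge 6$ for $t\ge 2$.

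There is no real obstacle in this argument; the one point requiring care is keeping the two directions straight — ensuring that every prefix polymorphism with $n=2$, $k=2$ yields a positive Pell solution (so the list is exhaustive) and that the digit condition $B\le x < B^2$ holds automatically rather than imposing an extra restriction that would shrink the family (so the list is not too large). Both reduce to the elementary inequalities above once the Pell correspondence is established.
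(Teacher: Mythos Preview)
Your proposal is correct and follows essentially the same route as the paper: reduce to $x^2 - x - 2B^2 = 0$, solve via the quadratic formula, recognize the Pell equation $m^2 - 8B^2 = 1$ with fundamental solution $(3,1)$, exclude $t=1$ because it gives $B=1$, and then verify the two-digit condition $B \le x < B^2$ (the paper does this via the same algebra, arriving at the identical key inequality $3 < B^2$). The only cosmetic differences are that you invoke $B = y_t \ge 6$ for $t\ge 2$ where the paper uses the weaker $B\ge 2$, and you bound $x > B$ via $m > 2\sqrt{2}\,B$ rather than the paper's direct squaring, but these are the same argument in different dress.
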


\begin{proof}
Suppose $n=2$ and $k=2$. Then, by the quadratic formula,
\begin{align*}
    x^n - x - B^k n = 0 &\implies x^2 - x - 2B^2 = 0\\
    &\implies x = \frac{1+\sqrt{1+8B^2}}{2},
\end{align*}
where the root with the negative square root is extraneous since it yields $x<0$.

It is sufficient and necessary that $1+8B^2$ is a square because that square would be odd, making $\frac{1+\sqrt{1+8B^2}}{2}$ a positive integer. The solutions to $1+8B^2 = z^2$ for positive integers $(z,B)$ are those of the Pell's equation $$z^2 - 8B^2 = 1.$$
Since the fundamental solution is $(z,B) = (3,1)$, this has exactly the solution set described by \eqref{eqn:Pell-parametrization}, where we take $t\ge 2$ to avoid $B=1$ in the $t=1$ scenario. For a reference to solving Pell's equation, see \cite[p.~36]{Titu}.

Moreover, we can prove backwards that $x$ has $k=2$ digits in base $B$:
\begin{align*}
    B^{k-1} \le x < B^k &\iff B \le x < B^2\\
    &\iff B \le \frac{1+\sqrt{1+8B^2}}{2} < B^2\\
    &\iff (2B-1)^2 \le 8B^2 \le (2B^2-1)^2\\
    &\iff 4B^2 - 4B + 1 \le 1+8B^2 < 4B^4 - 4B^2 +1.
\end{align*}
The left inequality is equivalent to
$$4B^2 - 4B + 1 \le 1+8B^2 \iff 4B^2 - 4B \le 8B^2 \iff 0 \le 4B^2 + 4B,$$
and the right inequality is equivalent to
$$1+8B^2 < 4B^4 - 4B^2 +1 \iff 12 B^2 < 4B^4 \iff 3 < B^2,$$
both of which are true, the latter because $B\ge 2$.
\end{proof}

\begin{example}
The simplest case of Theorem \ref{thm:Pell-class} is $(3+\sqrt{8})^2 = 17 + 6\sqrt{8}$, so we get $$(x,n,B,k) = (9,2,6,2),$$ which gives the example $13_6^2 = 9_{10}^2 = 81_{10} = 213_6$.
\end{example}

\begin{theorem}[Fermat class]\label{thm:Fermat-class}
If $(x,n,B,k)$ is a prefix polymorphism such that $n\ge 3$, then $$x^n \equiv x \pmod{n},$$
which is a phenomenon where $n$ is called a weak Fermat pseudoprime to base $x$; as a consequence, $\frac{x^n - x}{n}$ is an integer. Conversely, all quadruples 
\begin{equation}\label{eqn:Fermat-parametrization}
    (x,n,B,k) = \left(t,n,\frac{t^n -t}{n},1\right),
\end{equation}
such that $t\ge 2$, $n\ge 3$, and $t^n \equiv t \pmod{n}$ form a prefix polymorphism, except when $(x,n,B,k) = (2,3,2,1)$.
\end{theorem}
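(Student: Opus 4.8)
The plan is to prove the two implications separately, putting essentially all the effort into the converse. For the forward direction I would start from a prefix polymorphism $(x,n,B,k)$ with $n\ge 3$, invoke Theorem \ref{thm:n-above-3-is-k-1} to conclude $k=1$, and observe that \eqref{eqn:key-Diophantine} then reads $x^n = Bn + x$. Rearranging gives $x^n - x = Bn$; reducing modulo $n$ yields $x^n \equiv x \pmod n$ (so $n$ is a weak Fermat pseudoprime to base $x$), and $\frac{x^n - x}{n} = B$ is a positive integer because $B$ was one to begin with. This half is immediate.

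For the converse, I would fix integers $t \ge 2$, $n \ge 3$ with $t^n \equiv t \pmod n$, set $x = t$, $B = \frac{t^n - t}{n}$, $k = 1$, and check the conditions of Definition \ref{def:prefix-polymorphism} one at a time. That $B$ is an integer follows from the congruence, and $B > 0$ because $t \ge 2$ forces $t^n > t$. Equation \eqref{eqn:key-Diophantine} holds by construction, since $B^k n + x = \frac{t^n - t}{n}\cdot n + t = t^n$. The only condition with real content is that $x = t$ has exactly $k = 1$ digit in base $B$, i.e. that \eqref{eqn:key-inequality} amounts to $1 \le t < B$; the left side is trivial, and the right side rearranges to
$$t < \frac{t^n - t}{n} \iff t(n+1) < t^n \iff n+1 < t^{n-1}.$$
I note in passing that once $t < B$ is known, $B \ge 3 \ge 2$ is automatic.

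So the heart of the matter is the inequality $t^{n-1} > n+1$ for $t\ge 2$, $n\ge 3$, and I expect the (mild) case analysis here to be the main obstacle, precisely because the inequality is \emph{not} universally true. For $n \ge 4$ I would bound $t^{n-1} \ge 2^{n-1}$ and apply the second part of Lemma \ref{lem:power-linear-ineq}, which gives $2^{n-1}-1 \ge n$ with equality only at $n=3$, hence $2^{n-1} > n+1$ for $n \ge 4$. For $n = 3$ the inequality becomes $t^2 > 4$, true for $t \ge 3$ but false at $t = 2$, where $t^2 = 4 = n+1$. That single exception is exactly the excluded case: with $t = 2$, $n = 3$ we get $B = 2$, so $x = t = 2$ has two digits in base $2$ rather than one, and $(2,3,2,1)$ is genuinely not a prefix polymorphism even though it satisfies $t \ge 2$, $n \ge 3$, $2^3 \equiv 2 \pmod 3$. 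Since this is the only excluded pair, I would conclude that \eqref{eqn:Fermat-parametrization} captures every prefix polymorphism with $n \ge 3$.
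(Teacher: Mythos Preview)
Your proof is correct and follows essentially the same approach as the paper: both arguments reduce the converse to the inequality $t^{n-1} > n+1$ (equivalently $t^{n-1}-1 > n$), invoke Lemma~\ref{lem:power-linear-ineq} to handle $n\ge 4$, and treat $n=3$ separately to isolate the lone exception $(2,3,2,1)$. The only cosmetic difference is that you invoke Theorem~\ref{thm:n-above-3-is-k-1} in the forward direction to get $k=1$, whereas the paper reduces $x^n = x + B^k n$ modulo $n$ directly without needing $k=1$.
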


\begin{proof}
The first direction is easy, since any prefix polymorphism $(x,n,B,k)$ satisfies 
$$x^n = x + B^k n \implies x^n \equiv x \pmod{n}.$$
Conversely, assume \eqref{eqn:Fermat-parametrization} and suppose $x^n \equiv x \pmod{n}$. Then, there exists an integer $q$ such that
$$x^n = x + qn \implies q = \frac{x^n - x}{n}.$$
Since we require $x^n = x + B^1 n$ in the prefix polymorphism being constructed, we have to set $B=q$. The remaining requirements are the inequalities
$$B^{k-1}\le x < B^k \iff 1\le t < B,$$
which would also automatically also lead to $1<B \iff 2\le B$. Equivalently,
$$t<B \iff t < \frac{t^n - t}{n} \iff n < t^{n-1} -1.$$
Since we require that $t=x\ge 2$, it suffices to prove the tighter inequality
$$2^{n-1} - 1 > n.$$
Following Lemma \ref{lem:power-linear-ineq}, we know that $2^{n-1} - 1 \ge n$, with equality holding if and only if $n=3$, so we just have to study the $n=3$ case separately.

Suppose $n=3$, and recall that we are assuming $k=1$. Then,
$$0 = x^n -x - B^k n = x^3 -x - 3B \implies B = \frac{x^3 -x}{3}.$$
Then, we get the equivalent inequalities
\begin{align*}
x<B &\iff x < \frac{x^3 - x}{3} \iff 4x < x^3 \\
&\iff 4 < x^2 \iff 2 < x \iff 3\le x.
\end{align*}
So, there is potential trouble only for $x<3$. To investigate, if $x=2$, then $B = \frac{2^3 - 2}{3} = 2 = x,$ contradicting $x<B$. So, we need to exclude only $(x,n,B,k) = (2,3,2,1)$.
\end{proof}

\begin{corollary}\label{cor:prime-family}
An infinite family of prefix polymorphisms in the Fermat class is given by
$$(x,n,B,k) = \left(t,p,\frac{t^p-t}{p},1\right),$$
for integers $t\ge 2$ and odd primes $p$, excluding $(x,n,B,k) = (2,3,2,1)$. This gives an explicit family of prefix polymorphisms with unbounded $n$.
\end{corollary}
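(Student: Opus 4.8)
The plan is to recognize this statement as an immediate specialization of Theorem \ref{thm:Fermat-class}. That theorem already establishes that, for $n \ge 3$ and $t \ge 2$, the quadruple $\left(t, n, \frac{t^n - t}{n}, 1\right)$ is a prefix polymorphism precisely when $t^n \equiv t \pmod{n}$, with the sole exception $(2,3,2,1)$. So the only thing left to check is that the congruence hypothesis becomes automatic once $n$ is taken to be an odd prime.

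First I would invoke Fermat's little theorem in its universal form: for every prime $p$ and every integer $t$, one has $t^p \equiv t \pmod{p}$. This holds with no coprimality restriction, since if $p \mid t$ then both sides vanish modulo $p$, and otherwise $t^{p-1} \equiv 1 \pmod p$ can be multiplied through by $t$. Because an odd prime $p$ satisfies $p \ge 3$, the hypotheses $n = p \ge 3$ and $t \ge 2$ of Theorem \ref{thm:Fermat-class} are met, and the required congruence $t^p \equiv t \pmod{p}$ is satisfied for free. Hence each quadruple $\left(t, p, \frac{t^p - t}{p}, 1\right)$ with $t \ge 2$ and $p$ an odd prime is a prefix polymorphism, except for $(x,n,B,k) = (2,3,2,1)$, which is exactly the exception inherited from Theorem \ref{thm:Fermat-class}.

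For the assertion that this family is genuinely infinite with unbounded $n$, I would simply appeal to the infinitude of the primes: fixing any $t \ge 2$ and letting $p$ range over the odd primes already yields infinitely many distinct quadruples, and the entry $n = p$ appearing in them is not bounded above.

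The main obstacle here is merely bookkeeping rather than mathematics: one must be careful to note that Fermat's little theorem is applied in the form valid for all integers $t$ (not just those coprime to $p$), and one must verify that the excluded case $(2,3,2,1)$ of Theorem \ref{thm:Fermat-class} does in fact occur within this subfamily — it does, namely at $t = 2$, $p = 3$ — so that this exception is carried over verbatim into the statement of the corollary.
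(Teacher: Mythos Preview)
Your proposal is correct and follows essentially the same approach as the paper: invoke Fermat's little theorem in the form $t^p \equiv t \pmod{p}$ for all integers $t$, then apply Theorem~\ref{thm:Fermat-class} with $n=p$ an odd prime, inheriting the lone exception $(2,3,2,1)$. Your write-up is in fact more explicit than the paper's about the bookkeeping (the universal form of Fermat's little theorem, why the exception survives into the subfamily, and why $n$ is unbounded via the infinitude of primes), but the underlying argument is identical.
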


\begin{proof}
By Fermat's little theorem \cite[p.~78]{Hardy}, if $x$ is an integer and $p$ is a prime, then
$$x^p \equiv x \pmod{p}.$$
The result is now a special family within the Fermat class described in Theorem \ref{thm:Fermat-class}.
\end{proof}

\begin{example}
A simple case of Theorem \ref{thm:Fermat-class} takes $t=2$ and $n=5$, giving $B= \frac{2^5 - 2}{5} = 6$, so
$$(x,n,B,k) = (2,5,6,1).$$
We can check that $2_6^5 = 2_{10}^5 = 32_{10} = 52_6$.
\end{example}

\newpage

\end{document}